\def\urltilda{\kern -.15em\lower .7ex\hbox{\~{}}\kern .04em}  % URL-tilde
\newcommand{\ZZ}{\ensuremath{\mathbb{Z}}}                     % the integers
\newcommand{\mm}{\ensuremath{\mathfrak{m}}}                   % Fraktur m
\newcommand{\st}{\ensuremath{\colon\,}}                       % such that
\newcommand{\dcup}{\ensuremath{\,\mathaccent\cdot\cup\,}}     % disjoint union
\newcommand{\ddd}{\ensuremath{\dcup \cdots \dcup}}            % many disjoint unions
\numberwithin{figure}{section}
\numberwithin{equation}{section}
\newtheorem{theorem}{Theorem}[section]
\newtheorem{lemma}[theorem]{Lemma}
\newtheorem{corollary}[theorem]{Corollary}
\theoremstyle{definition}
\newtheorem{remark}[theorem]{Remark}
\newtheorem{example}[theorem]{Example}
\newtheorem*{acknowledgement}{Acknowledgement}
\begin{document}

% -- Title Block and Abstract
\title{On decomposing Betti tables and \texorpdfstring{$O$}{O}-sequences}
\author[D.\ Cook II]{David Cook II}
\address{Department of Mathematics \& Computer Science, Eastern Illinois University, Charleston, IL 61920, USA}
\email{\href{mailto:dwcook@eiu.edu}{dwcook@eiu.edu}}
\subjclass[2010]{13F55, 05E45, 13D02, 05C15, 06A12}
\keywords{Pure resolutions, Betti numbers, $O$-sequences, Boij-S\"oderberg theory}

\begin{abstract}
    The Boij-S\"oderberg characterization decomposes a Betti table into a unique positive integral linear combination of pure diagrams.
    Given a module with a pure resolution, we describe explicit formul\ae\ for computing the decomposition of the Betti table of the module
    given the decomposition of the truncation of the Betti table, and vice versa.
    
    Nagel and Sturgeon described the decomposition of Betti tables of ideals with $d$-linear resolutions; indeed, the coefficients are
    precisely finite $O$-sequences.  Using the extension formul\ae, we provide an explicit description of the coefficients of the 
    decomposition of the Betti table of the quotient ring of such an ideal.  Following from this, we describe the closed convex 
    simplicial cone of $O$-sequences.
\end{abstract}

\maketitle

% -----------------------------------------------------------------------------
% -- Section
\section{Introduction}\label{sec:intro}

Boij and S\"oderberg~\cite{BS-2008} conjectured a complete characterization, up to multiplication
by a positive rational number, of the structure of Betti tables of finitely generated Cohen-Macaulay $R$-graded modules.
The conjecture was proved in characteristic zero by Eisenbud, Fl{\o}ystad, and Weyman~\cite{EFW} and in positive characteristic
by Eisenbud and Schreyer~\cite{ES}.  Furthermore, it was recently extended to the non-Cohen-Macaulay
case by Boij and S\"oderberg~\cite{BS-2012}.

The Boij-S\"oderberg characterization decomposes a Betti table into a unique positive integral linear combination of pure diagrams;
see Section~\ref{sub:betti}.  As the theory is still under development, there is a dearth of families with known explicit Boij-S\"oderberg decompositions.
Gibbons, Jeffries, Mayes, Raicu, Stone, and White~\cite{GJMRSW} described the decomposition for complete intersections of codimension at most three,
and Wyrick-Flax~\cite{WF} extended this to codimension four.  Recently, G\"unt\"urk\"un~\cite{Gun} described the decomposition of lex-segment ideals.
However, the results of Nagel and Sturgeon~\cite{NS} and of Engstr\"om and Stamps~\cite{EnSt} are of more interest here as both
offer combinatorial interpretations of their decompositions.  In particular, Nagel and Sturgeon showed that the coefficients of the decomposition of 
a Betti table of an ideal with a linear resolution is precisely a finite $O$-sequence.  Separately, Engstr\"om and Stamps computed the 
decomposition of all $2$-linear resolutions via the Betti tables of the Stanley-Reisner rings of threshold graphs.

The goal of this manuscript is to expand upon the known interpretations of Boij-S\"oederberg decompositions.
In Section~\ref{sec:bsd}, we recall the relevant definitions and theorems of Boij-S\"oderberg theory.
We further prove an extension and truncation lemma (see Lemmas~\ref{lem:extension} and~\ref{lem:truncation}) for a 
pure resolution.  In Section~\ref{sec:dec-oseq}, we use the extension lemma to extend the decomposition of a Betti table of an 
ideal with a linear resolution given by Nagel and Sturgeon (recalled here in Theorem~\ref{thm:ns-36}) to the decomposition
of the Betti table of the quotient; see Theorem~\ref{thm:RI-decomp}, which is simpler than the decomposition offered by 
Nagel and Sturgeon (recalled here in Theorem~\ref{thm:ns-39}).  An immediate consequence of this theorem is a family
of inequalities that all finite $O$-sequences must satisfy; this is collected in Lemma~\ref{lem:halfspaces}.  Together, these inequalities
describe a closed convex simplicial cone that contains the $O$-sequences with a bounded number of variables; see Theorem~\ref{thm:cone}.  
We also describe the maximal rays of the cone therein.

While this note was being written, the paper of Boij and Smith~\cite{BSm} appeared.  Their paper independently described the cone of $O$-sequences
using the same inequalities described herein, though their focus and approach are different.  We make more specific references in the main body of the text.
We further note that Bertone, Nguyen, and Vorwerk~\cite{BNV} provided analogous results in the specific case of Stanley-Reisner rings.

% -----------------------------------------------------------------------------
% -- Section
\section{Decomposing Betti tables}\label{sec:bsd}

% -- Subsection
\subsection{Betti tables}\label{sub:betti}~

Let $R = K[x_1,\ldots,x_n]$ be the $n$-variate polynomial ring over a field $K$, and let $M$ be a finitely
generated graded $R$-module, e.g., a homogeneous ideal $I$ of $R$ or its quotient $R/I$.  The $R$-module $M(d)$
given by the relations $[M(d)]_i := [M]_{i+d}$ is the \emph{$d^{\rm th}$ twist of $M$}.  A \emph{minimal free resolution of $M$}
is an exact sequence of free $R$-modules of the form
\[
    0 \longrightarrow F_n \longrightarrow \cdots \longrightarrow F_0 \longrightarrow M \longrightarrow 0,
\]
where $F_i$, $0 \leq i \leq n$, is the free $R$-module
\[
    \bigoplus_{j \in \ZZ} R(-j)^{\beta_{i,j}(M)}.
\]
The numbers $\beta_{i,j}(M)$ are the \emph{graded Betti numbers of $M$}, and $\beta_i(M) = \sum_{j \in \ZZ}\beta_{i,j}(M)$
is the \emph{$i^{\rm th}$ total Betti number of $M$}.  We encode the graded Betti numbers of $M$ as the \emph{Betti table}
$\beta(M) = (\beta_{i,j}(M))$.

\begin{remark}\label{rem:betti-tables}
    We write the Betti table of $M$ as a matrix with entry $(i,j)$ given by $\beta_{i,i+j}(M)$.  
    For example, if $\beta_{0,0}(M) = 1$, $\beta_{1,3}(M) = 6$, $\beta_{2,4}(M) = 7$, and 
    $\beta_{3,5}(M) = 2$, then we would write the Betti table of $M$ as
    \[
        \beta(M) = \begin{array}{c|cccc}
            \beta_{i,j} & 0 & 1 & 2 & 3 \\
            \hline
                      0 & 1 & . & . & . \\
                      1 & . & . & . & . \\
                      2 & . & 6 & 7 & 2
        \end{array}
    \]
    Notice that we represent zeros by periods.  
\end{remark}

We recall the Boij-S\"oderberg characterization of Betti tables, where we follow the notation given in~\cite{BS-2012}, which differs from the
notation given with the original conjecture in~\cite{BS-2008}.  For an increasing sequence of integers
${\bf d} = (d_0, \ldots, d_s)$, where $0 \leq s \leq n$, the \emph{pure diagram given by ${\bf d}$} is the matrix
$\pi({\bf d})$ with entries given by
\[
    \pi({\bf d})_{i,j} =
    \begin{cases}
        \displaystyle (-1)^i \prod_{k=0, k\neq i}^{s} \frac{1}{d_k - d_i}, & \text{if~} j = d_i; \\
        0, & \text{otherwise.}
    \end{cases}
\]
If $d_0 = 0$, then the \emph{normalized pure diagram given by ${\bf d}$} is the diagram
\[
    \overline{\pi}({\bf d}) = d_1 \cdots d_s \cdot \pi({\bf d}).
\]
Notice that $\overline{\pi}({\bf d})_{0,0} = 1$.

We define a partial order on the pure diagrams by $\pi(d_0, \ldots, d_s) \leq \pi(d'_0, \ldots, d'_t)$,
if $s \geq t$ and $d_i \leq d'_i$ for $0 \leq i \leq t$.
The Boij-S\"oderberg characterization states that the Betti table of a finitely generated $R$-module can be uniquely
decomposed into a positive integral linear combination of pure diagrams that form a chain in the partial order.  

\begin{theorem}{\cite[Theorem~4.1]{BS-2012}}\label{thm:BS-decomp}
    Let $R = K[x_1, \ldots, x_n]$ be standard graded, and let $M$ be a finitely generated graded $R$-module.
    There exists a unique chain of pure diagrams $\pi({\bf d}_0) < \ldots < \pi({\bf d}_t)$ and positive
    integers $a_0, \ldots, a_t$ such that
    \[
        \beta(M) = \sum_{j=0}^{t} a_j \cdot \pi({\bf d}_j).
    \]
\end{theorem}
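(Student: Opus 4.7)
The plan is to prove the theorem in three stages: construct a positive rational chain decomposition by a greedy subtraction procedure, upgrade the rational coefficients to positive integers via an external pairing argument, and deduce uniqueness from the simplicial structure of chains of pure diagrams.

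For existence, I would run the Boij--S\"oderberg algorithm directly on $\beta(M)$. Let $s_0$ be the largest index with some $\beta_{s_0,j}(M) \neq 0$, and for $0 \leq i \leq s_0$ set $d_{0,i}$ equal to the smallest $j$ with $\beta_{i,j}(M) \neq 0$; this produces the componentwise-smallest degree sequence ${\bf d}_0 = (d_{0,0}, \ldots, d_{0,s_0})$ compatible with the support of $\beta(M)$. Now choose
\[
    a_0 = \min_{0 \leq i \leq s_0} \frac{\beta_{i,d_{0,i}}(M)}{\pi({\bf d}_0)_{i,d_{0,i}}},
\]
the largest scalar for which $\beta' := \beta(M) - a_0 \cdot \pi({\bf d}_0)$ stays entrywise nonnegative. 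The key point is that $\beta'$ still lies in the cone of Betti tables, which coincides with the positive rational cone spanned by pure diagrams (this is the content of the Eisenbud--Schreyer theorem, needed as input), and the subtraction strictly reduces the support at one of the positions $(i, d_{0,i})$, so iterating terminates after finitely many steps with degree sequences satisfying ${\bf d}_0 < {\bf d}_1 < \cdots < {\bf d}_t$.

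For integrality, I would invoke the Eisenbud--Schreyer pairing between Betti tables and cohomology tables of coherent sheaves. For suitably chosen sheaves --- e.g., supernatural bundles whose root sequences interlace the ${\bf d}_j$ --- the pairing $\langle \beta(M), \gamma(\mathcal{F}) \rangle$ evaluates to an explicit nonnegative integer combination in which the individual $a_j$ can be isolated, forcing each $a_j \in \ZZ_{\geq 0}$; combined with the strict positivity from the greedy step this gives $a_j \in \ZZ_{> 0}$. For uniqueness, the pure diagrams occurring in any chain are linearly independent: for consecutive $\pi({\bf d}_j) < \pi({\bf d}_{j+1})$ the former has support in at least one column absent from the latter, so the transition matrix restricted to a chain is triangular with nonzero diagonal and the coefficients are forced. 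Equivalently, the chains form the maximal cones of a simplicial fan covering the Betti cone, and $\beta(M)$ lies in the relative interior of a unique minimal such cone.

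The main obstacle is integrality. Existence of a positive rational chain decomposition is essentially formal once the Betti cone is known to equal the pure-diagram cone, and uniqueness falls out of the simplicial fan structure; but deducing $a_j \in \ZZ_{>0}$ requires the full pairing-with-cohomology-tables machinery, whose own proof rests on the equivariant resolutions of Eisenbud--Fl\o ystad--Weyman or the supernatural bundles of Eisenbud--Schreyer, and it is this ingredient --- rather than the combinatorial bookkeeping of the greedy algorithm --- that carries the real content of the theorem.
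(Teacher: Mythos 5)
The paper does not prove this statement: it is quoted, with attribution, from Boij and S\"oderberg~\cite[Theorem~4.1]{BS-2012}, and everything downstream in the paper treats it as a black box (it is invoked, for instance, in the ``Nota bene'' after Lemma~\ref{lem:extension}, in Corollary~\ref{cor:nzc}, and in Lemma~\ref{lem:halfspaces}). There is therefore no in-paper proof for your sketch to be compared against; what follows is an assessment of the sketch on its own terms.

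The existence-via-greedy-subtraction and uniqueness-via-simplicial-fan parts are the standard outline and are fine in broad strokes (modulo the usual care that the degree sequence read off a remainder is strictly increasing and that each remainder stays in the cone). The place where the sketch does not hold up as written is integrality. The Eisenbud--Schreyer pairing with cohomology tables is the tool for proving the facet inequalities of the Betti cone --- that it is no larger than the cone spanned by pure diagrams --- not for certifying that the greedy coefficients are integers. To ``isolate'' $a_j$ by pairing you would need a cohomology table $\gamma$ with $\langle \pi({\bf d}_k), \gamma \rangle = \delta_{jk}$ along the chain; no such integral cohomology table is available in general, and choosing a supernatural bundle with an interlacing root sequence only yields $\langle \pi({\bf d}_k), \gamma \rangle = 0$ for $k \neq j$ and some positive rational for $k = j$, which recovers $a_j \in \mathbb{Q}_{>0}$ but not $a_j \in \ZZ_{>0}$. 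Integrality is instead an arithmetic feature of the greedy step under this particular normalization of $\pi({\bf d})$: at the first step $a_0 = \min_i \beta_{i,d_{0,i}} \prod_{k \neq i} |d_{0,k} - d_{0,i}|$ is visibly a minimum of integers, and one must then carefully track how denominators propagate in the remainders after a position is zeroed out. That bookkeeping, not the pairing, is what carries the integrality claim, and it is missing from your proposal.
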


Fixing bounds on the regularity and projective dimension of the Betti tables under consideration, the above
can be used to show that the space of such Betti tables spaces forms a convex simplicial fan spanned by the
relevant pure diagrams.  If instead we look at the space spanned by the normalized
pure diagrams, then the result is a simplicial complex which is a hyperplane section of the simplicial fan. 
See~\cite[Section~3]{BS-2012} for a full discussion of these objects.

% -- Subsection
\subsection{Extension and truncation lemmas}\label{sub:lem}~

A finitely generated graded $R$-module $M$ has a \emph{pure resolution of type ${\bf d} = (d_0, \ldots, d_s)$}, where
$d_0 < \cdots < d_t$ are integers, if $\beta_{i,j}(M) \neq 0$ precisely when $j = d_i$ for $0 \leq i \leq n$.
Let $M$ have a pure resolution of type ${\bf d}$, and let $N$ be a module such that $\beta_{i,j}(M) = \beta_{i-1,j}(N)$
for $i \geq 1$, that is, $\beta(N)$ is a \emph{truncation of $\beta(M)$}.  Similarly, we can say $\beta(M)$ is an
\emph{extension of $\beta(N)$}.

\begin{example}\label{exa:truncation}
    Let $R = K[a,b,c,d,e]$, and let $I = (ab, bc, cd, de, ae)$, i.e., $I$ is the edge ideal of the five-cycle.
    In this case, $\beta(R/I)$ and $\beta(I)$ are the following matrices, respectively.\\
        \begin{minipage}[b]{0.48\linewidth}
            \[
                \beta(R/I) = 
                \begin{array}{c|cccc}
                    \beta_{i,j} & 0 & 1 & 2 & 3 \\
                    \hline
                            0 & 1 & . & . & . \\
                            1 & . & 5 & 5 & . \\
                            2 & . & . & . & 1
                \end{array}
            \]
        \end{minipage}
        \begin{minipage}[b]{0.48\linewidth}
            \[
                \beta(I) = 
                \begin{array}{c|ccc}
                    \beta_{i,j} & 0 & 1 & 2 \\
                    \hline
                              1 & 5 & 5 & . \\
                              2 & . & . & 1
                \end{array}
            \]
        \end{minipage}\\
    We thus see that $\beta(R/I)$ is an extension of $\beta(I)$, and $\beta(I)$ is a truncation of $\beta(R/I)$.
    
    In general, the Betti table of an ideal is the truncation of the Betti table of its quotient.
\end{example}

The following extension lemma shows how to construct the Boij-S\"oderberg decomposition of the original Betti table given
the Boij-S\"oderberg decomposition of the truncated Betti table.

\begin{lemma}\label{lem:extension}
    Let $R = K[x_1, \ldots, x_n]$ be standard graded, and let $M$ have a pure resolution of type ${\bf d} = (d_0, \ldots, d_t)$.
    If the truncation $\beta'$ of $\beta = \beta(M)$ is the Betti table of a finitely generated graded $R$-module with
    Boij-S\"oderberg decomposition given by
    \[
        \beta' = \sum_{j = 1}^{t} \alpha_j \cdot \pi_{(d_1, \ldots, d_j)},
    \]
    then, assuming $\alpha_{t+1} = 0$,
    \[
        \beta = (\beta_{0, d_0} - \alpha_1) \cdot \pi_{(d_0)} + \sum_{j = 1}^{t} \Big( (d_j - d_0) \alpha_j - \alpha_{j+1} \Big) \cdot \pi_{(d_0, \ldots, d_j)}.
    \]
\end{lemma}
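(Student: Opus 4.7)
I would verify the claimed identity entry by entry. Since $M$ has a pure resolution of type ${\bf d} = (d_0, \ldots, d_t)$, the table $\beta = \beta(M)$ is supported exactly on $\{(i, d_i) : 0 \leq i \leq t\}$, and each term on the right-hand side has support in that same set: $\pi(d_0)$ is supported only at $(0, d_0)$, and each $\pi(d_0, \ldots, d_j)$ is supported on $\{(i, d_i) : 0 \leq i \leq j\}$. It therefore suffices to check the identity at each position $(i, d_i)$. The two elementary identities that drive the calculation, both immediate from the definition of $\pi(\cdot)$ by factoring the $k = 0$ term out of the product, are
\[
    \pi(d_0, \ldots, d_j)_{0, d_0} = \prod_{k=1}^j \frac{1}{d_k - d_0} \qquad \text{and} \qquad \pi(d_0, \ldots, d_j)_{i, d_i} = \frac{-1}{d_i - d_0} \cdot \pi(d_1, \ldots, d_j)_{i-1, d_i}
\]
for $1 \leq i \leq j$.

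At $(0, d_0)$, writing $P_j := \prod_{k=1}^j (d_k - d_0)^{-1}$ and $c_j := (d_j - d_0)\alpha_j - \alpha_{j+1}$, the first identity reduces the right-hand side to $(\beta_{0, d_0} - \alpha_1) + \sum_{j=1}^t c_j P_j$. Since $(d_j - d_0) P_j = P_{j-1}$, this sum splits as $\sum_{j=1}^t \alpha_j P_{j-1} - \sum_{j=1}^t \alpha_{j+1} P_j$; reindexing the second sum collapses everything except the boundary terms $\alpha_1 P_0 = \alpha_1$ and $-\alpha_{t+1} P_t = 0$, yielding exactly $\beta_{0, d_0}$.

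At $(i, d_i)$ with $i \geq 1$, the second identity rewrites the right-hand side in terms of the values $\pi(d_1, \ldots, d_j)_{i-1, d_i}$; matching against $\beta_{i, d_i}(M) = \beta'_{i-1, d_i} = \sum_{j=i}^t \alpha_j \pi(d_1, \ldots, d_j)_{i-1, d_i}$ reduces, after dividing by the common factor $(d_i - d_0)^{-1}$ and rearranging, to the telescoping identity
\[
    \sum_{j=i}^t (d_j - d_i) \alpha_j Q_j = \sum_{j=i}^t \alpha_{j+1} Q_j, \qquad Q_j := \prod_{k=1,\, k \neq i}^j (d_k - d_i)^{-1},
\]
which holds because $(d_j - d_i) Q_j = Q_{j-1}$ for $j > i$, the $j = i$ term on the left vanishes, and $\alpha_{t+1} = 0$. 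The main obstacle is the careful bookkeeping of signs and the two distinct reindexings; once the two elementary identities above are in hand, the rest is a mechanical telescoping.
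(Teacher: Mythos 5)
Your proof is correct and follows essentially the same route as the paper's: verify the claimed decomposition entry by entry at each position $(i, d_i)$ and collapse the resulting sums by telescoping, using $\alpha_{t+1} = 0$ to kill the boundary term. One small caveat: the second of your two ``elementary identities'' carries a spurious minus sign. Factoring the $k = 0$ term out of $\pi(d_0, \ldots, d_j)_{i, d_i} = (-1)^i \prod_{k \neq i}(d_k - d_i)^{-1}$ gives $\tfrac{1}{d_0 - d_i}$, but the leading $(-1)^i$ also drops to $(-1)^{i-1}$ when you pass to $\pi(d_1, \ldots, d_j)_{i-1, d_i}$, and these two sign changes cancel, so the correct relation is
\[
\pi(d_0, \ldots, d_j)_{i, d_i} = \frac{1}{d_i - d_0} \cdot \pi(d_1, \ldots, d_j)_{i-1, d_i},
\]
with no minus sign. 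The telescoping identity $\sum_{j=i}^{t}(d_j - d_i)\alpha_j Q_j = \sum_{j=i}^{t}\alpha_{j+1} Q_j$ that you go on to derive is the one obtained from this corrected relation, so the downstream argument is sound and the slip is only a typo; just reconcile the displayed identity with the calculation that follows it.
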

{\em Nota bene:} By assumption, see Theorem~\ref{thm:BS-decomp}, $\alpha_1, \ldots, \alpha_t$ are positive integers.
\begin{proof}
    Set $\gamma$ to be the diagram
    \[
        (\beta_{0, d_0} - \alpha_1) \cdot \pi_{(d_0)} + \sum_{j = 1}^{t} \Big( (d_j - d_0) \alpha_j - \alpha_{j+1} \Big) \cdot \pi_{(d_0, \ldots, d_j)}.
    \]
    Thus for $1 \leq i \leq t$, we have 
    \begin{equation*}
        \begin{split}
            \gamma_{i,d_i} &= \sum_{j=i}^{t} \Big( (d_j - d_0) \alpha_j - \alpha_{j+1} \Big) \cdot [\pi_{(d_0, \ldots, d_j)}]_{i,d_i} \\
                             &= \sum_{j=i}^{t} \frac{ (-1)^i \Big( (d_j - d_0) \alpha_j - \alpha_{j+1} \Big) }
                                                    { \prod_{k = 0, k \neq i}^{j} (d_k - d_i) } \\
                             &= \frac{1}{\prod_{k=0}^{i-1} (d_i - d_k)}
                                \sum_{j=i}^{t} \frac{ (d_j - d_0) \alpha_j - \alpha_{j+1} }
                                                    { \prod_{k = i+1}^{j} (d_k - d_i) } \\
                             &= \frac{1}{\prod_{k=1}^{i-1} (d_i - d_k)}
                                \sum_{j=i}^{t} 
                                    \left( 
                                        \frac{1}{\prod_{k = i+1}^{j} (d_k - d_i)}
                                        \frac{(d_j - d_0) \alpha_j - \alpha_{j+1}}{d_i - d_0} 
                                    \right) \\
                             &= \frac{1}{\prod_{k=1}^{i-1} (d_i - d_k)}
                                \left(
                                    \sum_{j = i}^{t}
                                        \frac{\alpha_j}
                                             {\prod_{k=i+1}^{j} (d_k - d_i)}
                                \right) \\
                             &= \sum_{j=i}^{t} \alpha_j \cdot [\pi_{(d_1, \ldots, d_j)}]_{i-1,d_i},
        \end{split}
    \end{equation*}
    which is $\beta'_{i-1,d_i} = \beta_{i,d_i}$.  It only remains to be shown that $\gamma_{0,d_0} = \beta_{0,d_0}$.  This follows as
    \begin{equation*}
        \begin{split}
            \gamma_{0,d_0} &= (\beta_{0,d_0} - \alpha_1) \cdot [\pi_{(d_0}]_{0,d_0}] + \sum_{j=1}^{t} \Big( (d_j - d_0) \alpha_j - \alpha_{j+1} \Big) \cdot [\pi_{(d_0, \ldots, d_j)}]_{0,d_0} \\
                           &= (\beta_{0,d_0} - \alpha_1) + \sum_{j=1}^{t} \frac{(d_j - d_0) \alpha_j - \alpha_{j+1}}{\prod_{k = 1}^{j} (d_k - d_0)} \\
                           &= (\beta_{0,d_0} - \alpha_1) + \alpha_1.
        \end{split}
    \end{equation*}
    Thus $\gamma = \beta$, as desired.
\end{proof}

An immediate consequence of the preceding lemma is a set of bounds on the possible coefficients of a pure diagram.

\begin{corollary}\label{cor:nzc}
    Let $\alpha_1, \ldots, \alpha_t$ and ${\bf d} = (d_0, \ldots, d_t)$ be as in Lemma~\ref{lem:extension}.
    For $1 \leq j \leq t$, $(d_j - d_0) \alpha_j - \alpha_{j+1} \geq 0$.
\end{corollary}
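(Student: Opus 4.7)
The plan is to apply the uniqueness of the Boij-S\"oderberg decomposition (Theorem~\ref{thm:BS-decomp}) to the explicit formula produced by Lemma~\ref{lem:extension}. Setting $c_0 := \beta_{0,d_0} - \alpha_1$ and $c_j := (d_j - d_0)\alpha_j - \alpha_{j+1}$ for $1 \leq j \leq t$, the lemma writes $\beta(M) = \sum_{j=0}^{t} c_j \cdot \pi_{(d_0,\ldots,d_j)}$, and the goal is to show that each $c_j \geq 0$.

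First I would verify that these pure diagrams form a chain in the partial order of Section~\ref{sub:betti}: for each $j$, the diagram $\pi_{(d_0,\ldots,d_j)}$ agrees with $\pi_{(d_0,\ldots,d_{j-1})}$ on the initial indices and has one additional entry, so $\pi_{(d_0,\ldots,d_t)} < \cdots < \pi_{(d_0,d_1)} < \pi_{(d_0)}$. Next I would note that these diagrams are linearly independent: the entry of $\pi_{(d_0,\ldots,d_j)}$ at position $(j,d_j)$ is nonzero, while $\pi_{(d_0,\ldots,d_{j'})}$ vanishes at $(j,d_j)$ whenever $j' < j$. Consequently, the scalars $c_j$ are uniquely determined by $\beta(M)$ among expansions in this family.

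The crux is to identify the lemma's expression with the genuine Boij-S\"oderberg decomposition. Because $M$ has a pure resolution of type ${\bf d}$, the support of $\beta(M)$ is exactly $\{(i,d_i) : 0 \leq i \leq t\}$. Any pure diagram $\pi({\bf e})$ appearing with positive coefficient in the Boij-S\"oderberg decomposition must have support contained in the support of $\beta(M)$, since positive coefficients preclude cancellation; it follows that every such $\pi({\bf e})$ equals $\pi_{(d_0,\ldots,d_j)}$ for some $0 \leq j \leq t$. By the uniqueness just established, the decomposition from Lemma~\ref{lem:extension} must coincide with the Boij-S\"oderberg decomposition (with $c_j = 0$ for indices $j$ absent from the Boij-S\"oderberg chain), and Theorem~\ref{thm:BS-decomp} guarantees the surviving coefficients are positive integers. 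Hence every $c_j$ is a non-negative integer, giving $(d_j - d_0)\alpha_j - \alpha_{j+1} \geq 0$ for $1 \leq j \leq t$.

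There is no real technical obstacle; the main subtlety is the support argument, which hinges on the fact that positive-coefficient combinations of pure diagrams cannot cancel and therefore constrain which pure diagrams can appear in a Boij-S\"oderberg decomposition.
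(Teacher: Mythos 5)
Your argument is correct and follows the same strategy as the paper's proof: identify the expansion from Lemma~\ref{lem:extension} with the Boij--S\"oderberg decomposition of $\beta$ and invoke the positivity guaranteed by Theorem~\ref{thm:BS-decomp}. You have simply made explicit the support-containment and linear-independence steps that the paper's one-line proof leaves implicit.
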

\begin{proof}
    The decomposition of $\beta$ in Lemma~\ref{lem:extension} is, after removal of zero terms, the Boij-S\"oderberg
    decomposition of $\beta$.  Hence by Theorem~\ref{thm:BS-decomp}, the remaining coefficients are positive integers.
\end{proof}

\begin{remark}\label{rem:one}
    If $I$ is an ideal with a pure resolution as described in Lemma~\ref{lem:extension}, then the first coefficient,
    $\alpha_1$, must be one.  This follows as $\beta_{0,0}(R/I) = 1 \geq \alpha_1 \geq 1$.
\end{remark}

Rearranging the extension lemma we get the following truncation lemma, which shows how to construct the Boij-S\"oderberg decomposition of the truncated
Betti table given the Boij-S\"oderberg decomposition of the original Betti table.  We note that Sturgeon~\cite[Theorem~3.2.2]{St}
explicitly described the decomposition of the truncation of a pure diagram.

\begin{lemma}\label{lem:truncation}
    Let $R = K[x_1, \ldots, x_n]$ be standard graded, and let $M$ have a pure resolution of type ${\bf d} = (d_0, \ldots, d_t)$.
    Suppose $\beta = \beta(M)$ has Boij-S\"oderberg decomposition given by 
    \[
        \beta = \sum_{j = 0}^{t} \delta_j \cdot \pi_{(d_0, \ldots, d_j)}.
    \]
    If the truncation $\beta'$ of $\beta$ is the Betti table of a finitely generated graded $R$-module, then
    \[
        \beta' = \sum_{j = 1}^{t} 
        \left(
            \sum_{k = j}^{t}
                \frac{\delta_k}
                     {\prod_{p = j}^{k} (d_p - d_0)}
        \right)
        \cdot \pi_{(d_1, \ldots, d_j)}.
    \]
\end{lemma}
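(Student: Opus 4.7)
The plan is to invert the extension lemma. First, I would observe that since $\beta'$ is, by hypothesis, the Betti table of a finitely generated graded $R$-module, Theorem~\ref{thm:BS-decomp} supplies it with a unique Boij-S\"oderberg decomposition. The support of $\beta'$ is precisely $\{(i-1, d_i) : 1 \leq i \leq t\}$, since $\beta'_{i-1, j} = \beta_{i,j}(M)$ is nonzero only when $j = d_i$. Because each pure diagram $\pi({\bf e})$ has entrywise non-negative entries, and the decomposition uses positive coefficients, the support of every pure diagram appearing must lie in this set. A short combinatorial check then forces each diagram to be of the form $\pi_{(d_1, \ldots, d_j)}$ for some $1 \leq j \leq t$. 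Hence we may write $\beta' = \sum_{j=1}^{t} \alpha_j \pi_{(d_1, \ldots, d_j)}$ for some non-negative integers $\alpha_j$ (with $\alpha_j > 0$ exactly for the diagrams that actually appear in the chain).

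Next, I would apply Lemma~\ref{lem:extension} to this decomposition to obtain
\[
    \beta = (\beta_{0,d_0} - \alpha_1) \cdot \pi_{(d_0)} + \sum_{j=1}^{t} \Bigl( (d_j - d_0)\alpha_j - \alpha_{j+1} \Bigr) \cdot \pi_{(d_0, \ldots, d_j)},
\]
with the convention $\alpha_{t+1} = 0$. Comparing this with the given decomposition $\beta = \sum_{j=0}^{t} \delta_j \cdot \pi_{(d_0, \ldots, d_j)}$ and invoking uniqueness in Theorem~\ref{thm:BS-decomp} yields the linear recurrence
\[
    \delta_j = (d_j - d_0)\alpha_j - \alpha_{j+1}, \qquad 1 \leq j \leq t.
\]

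Finally, I would solve this recurrence by backward induction on $j$. Rewriting it as $\alpha_j = (\alpha_{j+1} + \delta_j)/(d_j - d_0)$ and using the base case $\alpha_{t+1} = 0$, a brief induction confirms the closed form
\[
    \alpha_j = \sum_{k=j}^{t} \frac{\delta_k}{\prod_{p=j}^{k} (d_p - d_0)},
\]
which when substituted into $\beta' = \sum_{j=1}^{t} \alpha_j \pi_{(d_1, \ldots, d_j)}$ produces the stated formula. The main obstacle, though mild, is the opening step of pinning down the shape of the decomposition of $\beta'$; once that support-based argument is in hand, the remainder is direct manipulation of the extension lemma and a routine recurrence.
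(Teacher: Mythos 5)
Your proof is correct and takes essentially the same approach as the paper: apply the extension lemma to derive the system $\delta_j = (d_j - d_0)\alpha_j - \alpha_{j+1}$ and solve for the $\alpha_j$. The paper's proof is in fact considerably more terse---it simply states that Lemma~\ref{lem:extension} gives $t$ equations in $t$ unknowns and that solving them yields the result---and implicitly assumes the decomposition of $\beta'$ is supported on the chain $\pi_{(d_1)} < \cdots < \pi_{(d_1,\ldots,d_t)}$. Your opening support argument (using non-negativity of pure diagram entries and positivity of Boij-S\"oderberg coefficients to pin down which diagrams can appear) makes this step explicit and is a genuine improvement in rigor over the paper's presentation, though it does not change the underlying method.
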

\begin{proof}
    By Lemma~\ref{lem:extension}, we have the $t$ equations $\delta_j = (d_j - d_0) \alpha_j - \alpha_{j+1}$, for $1 \leq j \leq t$,
    in $t$ unknowns, i.e., $\alpha_1, \ldots, \alpha_t$.  Solving for the unknowns yields the stated result.
\end{proof}

% -----------------------------------------------------------------------------
% -- Section
\section{Decomposing \texorpdfstring{$O$}{O}-sequences}\label{sec:dec-oseq}

% -- Subsection
\subsection{\texorpdfstring{$O$}{O}-sequences}\label{sub:oseq}~

Given positive integers $a$ and $d$, the \emph{$d^{\rm th}$ Macaulay representation of $a$} is the unique expansion 
\[
    a = \binom{a_d}{d} + \binom{a_{d-1}}{d-1} + \cdots + \binom{a_t}{t}
\]
such that $a_d > a_{d-1} > \cdots > a_t \geq t \geq 1$. Define $a^{\langle d \rangle}$ to be the integer
\[
    \binom{a_d+1}{d+1} + \binom{a_{d-1}+1}{d} + \cdots + \binom{a_t+1}{t+1},
\]
and set $0^{\langle d \rangle} = 0$.  A sequence of nonnegative integers $(h_j)_{j\geq 0}$ is an \emph{$O$-sequence}
if $h_0 = 1$ and $h_{j+1} \leq h_j^{\langle j \rangle}$ for $j \geq 1$.  Macaulay's Theorem (see, e.g., \cite[Theorem~4.2.10]{BH}) 
characterizes $O$-sequences.

\begin{theorem}{\rm\bf (Macaulay's Theorem)}
    Given a sequence of nonnegative integers $(h_j)_{j\geq 0}$,  $(h_j)_{j\geq 0}$ is an $O$-sequence if and only if
    $(h_j)_{j\geq 0}$ is the Hilbert function of a standard graded $K$-algebra, $R/I$, i.e., $\dim_{K}[R/I]_j = h_j$ for all $j$.
\end{theorem}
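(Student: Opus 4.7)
The plan is to prove both directions of Macaulay's theorem using lex-segment ideals as the main tool. Write $R = K[x_1, \ldots, x_n]$ with the standard grading, and call a homogeneous ideal $I \subseteq R$ a \emph{lex-segment ideal} if, for each $j \geq 0$, $[I]_j$ is the $K$-span of the $\dim_K [I]_j$ lexicographically largest monomials of $R_j$ (with respect to $x_1 > \cdots > x_n$).

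For the ``only if'' direction, I would first invoke the classical compression theorem, which says that any homogeneous ideal has the same Hilbert function as some lex-segment ideal; see \cite[Theorem~4.2.10]{BH}. Having reduced to this case, assume $I$ is a lex-segment with $\dim_K [R/I]_j = h_j$, so that $[R/I]_j$ is spanned by the $h_j$ lex-smallest monomials $L_j$ of $R_j$. One verifies by direct combinatorial count that the set $R_1 \cdot (R_j \setminus L_j)$ is itself a lex-segment of $R_{j+1}$, of cardinality exactly $\dim_K R_{j+1} - h_j^{\langle j \rangle}$, and that it is contained in $[I]_{j+1}$. This containment forces $h_{j+1} \leq h_j^{\langle j \rangle}$. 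The exact matching with the Macaulay representation comes from unwinding which monomials of $R_{j+1}$ appear as shadows of the lex-largest monomials of $R_j$ under the binomial bookkeeping that defines $h_j^{\langle j \rangle}$.

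For the ``if'' direction, given any $O$-sequence $(h_j)$, I would construct a lex-segment ideal realizing it. Choose $n$ large enough that $\dim_K R_j \geq h_j$ in every relevant degree. In each degree $j$, let $L_j$ be the set of $h_j$ lex-smallest monomials of $R_j$, and let $I$ be the $K$-span, across all degrees, of the complementary monomial sets $R_j \setminus L_j$. The crux is verifying that $I$ is closed under multiplication by $R_1$; this is exactly the content of $h_{j+1} \leq h_j^{\langle j \rangle}$, because $R_1 \cdot (R_j \setminus L_j)$ is a lex-segment of $R_{j+1}$ of size $\dim_K R_{j+1} - h_j^{\langle j \rangle}$, which is contained in $R_{j+1} \setminus L_{j+1}$ precisely when $h_{j+1} \leq h_j^{\langle j \rangle}$. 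The resulting standard graded algebra $R/I$ then has Hilbert function $(h_j)$ by construction.

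The main obstacle is the compression step in the ``only if'' direction, which produces a lex-segment companion for an arbitrary graded ideal. This is the combinatorial heart of the theorem and is typically handled via generic initial ideals followed by a term-by-term monomial replacement argument, or alternatively via Clements--Lindstr\"om type compressions. Once that reduction is in place, the remaining arguments on both sides reduce to manipulations of binomial coefficients from the $j$th Macaulay representation.
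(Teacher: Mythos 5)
The paper does not give a proof of Macaulay's Theorem; it simply cites \cite[Theorem~4.2.10]{BH}, treating the result as classical background. So there is no internal proof for your argument to match or diverge from. That said, your sketch does follow the standard lex-segment strategy that the Bruns--Herzog reference itself uses, so in that sense you are aligned with the source the paper points to.

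Two substantive comments on the sketch itself. First, for the ``only if'' direction you invoke a ``compression theorem'' --- that every homogeneous ideal has the Hilbert function of some lex-segment ideal --- and cite it to the same location, \cite[Theorem~4.2.10]{BH}, that you are in the process of proving. In Bruns--Herzog that theorem is Macaulay's theorem, and the lex-replacement (or compression/generic-initial-ideal) step is the technical core of its proof, not an independent prior result; so as written your argument is essentially circular unless you either prove the compression step directly or point to a genuinely separate source (e.g.\ Clements--Lindstr\"om, or the Green/Gasharov treatments). You do flag this as ``the main obstacle,'' which is honest, but the proof is incomplete without it. Second, the elementary combinatorial identities you assert --- that $R_1\cdot(R_j\setminus L_j)$ is again a lex-segment, with $|R_1\cdot(R_j\setminus L_j)| = \dim_K R_{j+1} - h_j^{\langle j\rangle}$ --- are correct and I spot-checked them in small cases; they are the Kruskal--Katona/Macaulay shadow bookkeeping, but ``one verifies by direct combinatorial count'' is doing real work that would need to be carried out. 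The ``if'' direction is also fine as sketched, and the choice $n = h_1$ already suffices to guarantee $\dim_K R_j \ge h_j$ for all $j$ (since the polynomial-ring Hilbert function is the termwise-maximal $O$-sequence with a given $h_1$), so you could be more explicit there than ``choose $n$ large enough.''
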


Given a homogeneous ideal $I$ of a polynomial ring $R$, we define $h_{R/I}$ to be the $O$-sequence coming from the Hilbert function
of $R/I$.  If $R/I$ is artinian, then $h = h_{R/I}$ is eventually zero, i.e., there exists an integer $t$, called the \emph{socle degree},
such that $h_t > 0$ and $h_{t+1} = 0$. 

\begin{example}\label{exa:mm}
    Let $R = K[x_1, \ldots, x_n]$.  The maximal irrelevant ideal of $R$ is the ideal $\mm = (x_1, \ldots, x_n)$.  We notice that the
    $O$-sequence of $R/\mm^{t+1}$ is the finite $O$-sequence 
    \[
        \left( 1, \binom{n+1-1}{1}, \binom{n+2-1}{2}, \ldots, \binom{n+t-1}{t} \right)
    \]
    with socle degree $t$.
\end{example}

% -- Subsection
\subsection{Linear resolutions}\label{sub:linear}~

A finitely generated graded $R$-module $M$ has a \emph{$d$-linear resolution} if $\beta_{i,j}(M) = 0$ for all 
$0 \leq i \leq n$ and $j \neq i + d$.  Clearly, having a $d$-linear resolution implies having a pure resolution.
Nagel and Sturgeon classified the Boij-S\"oderberg decomposition of Betti tables of ideals with $d$-linear resolutions.

\begin{theorem}{\cite[Theorem~3.6]{NS}}\label{thm:ns-36}
    Let $R = K[x_1, \ldots, x_n]$, and consider the diagram
    \[
        \beta = \sum_{j=0}^{t} \alpha_j j! \cdot \pi(d,d+1,\ldots,d+j),
    \]
    where $\alpha_0, \ldots, \alpha_t$ are nonnegative integers and $t \leq n$.
    The following conditions are equivalent:
    \begin{enumerate}   
        \item $\beta$ is the Betti table of an ideal of $R$ with a $d$-linear resolution.
        \item $\beta$ is the Betti table of a strongly stable ideal $I$ whose minimal generators have degree $d$.
        \item $\beta$ is the Betti table of the ideal associated to a $d$-uniform Ferrers hypergraph $F$ with
            \[
                \alpha_j(F) =
                \begin{cases}
                    \alpha_j & \text{if~} 0 \leq j \leq t,\\
                    0 & \text{if~} t < j,
                \end{cases}
            \]
            where $\alpha_j = \#\{ (i_1, \ldots, i_d) \in F \st i_1 + \cdots + i_d = j + d \}$.
        \item $(\alpha_0, \ldots, \alpha_t)$ is an $O$-sequence with $\alpha_1 \leq d$.
    \end{enumerate}
\end{theorem}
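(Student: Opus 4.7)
The plan is to close the cycle of implications $(ii) \Rightarrow (iii) \Rightarrow (i) \Rightarrow (iv) \Rightarrow (ii)$, using the Eliahou--Kervaire resolution as the computational workhorse and Macaulay's theorem as the bridge between homological data and $O$-sequences. For $(ii) \Rightarrow (iii)$ I would introduce the bijection sending a strongly stable ideal $I$ generated in degree $d$ to the $d$-uniform hypergraph whose hyperedges are the sorted support sequences $(i_1 \leq \cdots \leq i_d)$ of the minimal generators of $I$; the Borel-fixed property translates precisely into the Ferrers/shifted condition on $F$, and $\alpha_j(F)$ then counts, by construction, the minimal generators of total degree $j + d$. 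The implication $(iii) \Rightarrow (i)$ is then immediate: the ideal attached to such a Ferrers hypergraph is strongly stable, hence has a $d$-linear resolution by the Eliahou--Kervaire theorem.

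The crux is $(i) \Rightarrow (iv)$. By Theorem~\ref{thm:BS-decomp}, any ideal $I$ with a $d$-linear resolution admits a decomposition
\[
    \beta(I) = \sum_{j=0}^{t} c_j \cdot \pi(d, d+1, \ldots, d+j)
\]
with nonnegative integer coefficients $c_j$, since every pure diagram that can participate in the chain must have degree sequence of the form $(d, d+1, \ldots, d+j)$. Writing $c_j = \alpha_j \cdot j!$, I would pass to a strongly stable model for $\beta(I)$ via the generic initial ideal in reverse-lexicographic order, which preserves Betti tables for componentwise linear ideals, and then use Eliahou--Kervaire to write $\beta_{i, i+d}(I) = \sum_u \binom{\max(u) - 1}{i}$, the sum ranging over minimal generators $u$. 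Unraveling this row-by-row against the explicit entries of $\pi(d, d+1, \ldots, d+j)$ identifies $\alpha_j$ with $\dim_K [K[y_1, \ldots, y_d]/J]_j$ for an explicit Borel-fixed monomial ideal $J$; Macaulay's theorem then delivers the $O$-sequence condition, and the bound $\alpha_1 \leq d$ falls out of $\dim_K [K[y_1, \ldots, y_d]]_1 = d$.

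For $(iv) \Rightarrow (ii)$, given an $O$-sequence $(\alpha_0, \ldots, \alpha_t)$ with $\alpha_1 \leq d$, Macaulay's theorem produces a standard graded algebra $K[y_1, \ldots, y_d]/J$ realizing it; I would lift $J$ to a Borel-fixed ideal of $R$ whose minimal generators in degree $j + d$ correspond to the degree-$j$ monomials outside $J$, each multiplied by a canonical suffix in the last variables, paralleling the Ferrers hypergraph description of part (iii). The main obstacle will be the bookkeeping in $(i) \Rightarrow (iv)$: one must verify that the Eliahou--Kervaire Betti numbers, aggregated into rows of the Betti table, collapse exactly into the coefficients predicted by $c_j \cdot \pi(d, d+1, \ldots, d+j)$. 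This amounts to a hockey-stick-style binomial identity relating $\sum_u \binom{\max(u) - 1}{i}$ to the entries of the relevant pure diagrams, and pinning down that algebraic identification --- simultaneously with the combinatorial bijection to Ferrers hypergraphs --- is where the technical content of the proof concentrates.
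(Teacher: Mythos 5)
The paper does not prove this theorem; it is quoted verbatim from Nagel--Sturgeon~\cite[Theorem~3.6]{NS}, so there is no proof here to compare against. Judged on its own, your sketch has the right architecture, but the bijection you propose for $(ii) \Rightarrow (iii)$ is concretely wrong, and the crux $(i) \Rightarrow (iv)$ is not actually argued.

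The sorted-support map is not a bijection onto Ferrers hypergraphs. For $I = (x_1^2, x_1x_2, x_2^2) \subset K[x_1,x_2]$, the sorted supports $\{(1,1),(1,2),(2,2)\}$ do not form a Ferrers hypergraph: $(2,1) \leq (2,2)$ componentwise, yet $(2,1)$ is absent. The correct correspondence is a staircase map $x_{i_1}\cdots x_{i_d} \mapsto (i_1,\, i_2-i_1+1,\, \ldots,\, i_d-i_{d-1}+1)$, which sends the example to $\{(1,1),(1,2),(2,1)\}$; one checks that componentwise decrease of a hyperedge corresponds exactly to the Borel exchange moves on the generator, and that $\sum_k i_k - d = \max(u) - 1$, which is why the Ferrers statistic $\alpha_j(F)$ agrees with the Eliahou--Kervaire statistic (number of generators with $\max = j+1$). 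Even with the right bijection, you still owe the argument that the \emph{squarefree} Ferrers ideal $I(F)$, which lives in a strictly larger polynomial ring, has the same Betti table as the strongly stable $I$; that is a specialization/polarization theorem (Corso--Nagel for $d=2$, Nagel--Reiner in general) and not a formality to omit.

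For $(i) \Rightarrow (iv)$, passing to the gin and reading off $\beta_{i,i+d}(I) = \sum_j \alpha_j \binom{j}{i}$ from $\pi(d,\ldots,d+j)_{i,d+i} = \tfrac{1}{j!}\binom{j}{i}$ is correct. But asserting that ``Macaulay's theorem then delivers the $O$-sequence condition'' elides the whole difficulty: you must exhibit an ideal $J \subset K[z_1,\ldots,z_d]$ with $\dim_K[K[z_1,\ldots,z_d]/J]_j = \alpha_j$. This is precisely where the Ferrers picture does the work: after shifting each coordinate down by one, a Ferrers hypergraph is a finite order ideal of monomials in $\NN^d$, so its rank generating function is manifestly the Hilbert function of a monomial quotient of $K[z_1,\ldots,z_d]$, and $\alpha_1 \leq d$ falls out because $\NN^d$ has only $d$ lattice points of rank one. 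Without routing through $(iii)$, or reproducing an equivalent order-ideal construction inside the Eliahou--Kervaire bookkeeping, your $(i) \Rightarrow (iv)$ would stall at exactly the step you flagged as ``where the technical content concentrates.''
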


\begin{remark}\label{rem:ns-36}
    Murai~\cite[Proposition~3.8]{Mu} gave the first classification of the possible Betti numbers of ideals with linear resolutions.
    Herzog, Sharifan, and Varbaro~\cite[Theorem~3.2]{HSV} provided a similar classification via a different approach.
    We note, however, that neither gave the Boij-S\"oderberg decomposition.
\end{remark}

Nagel and Sturgeon also classified the Boij-S\"oderberg decompositions of the quotients of Ferrers ideals.
We recall it here as a convenience to the reader.

\begin{theorem}{\cite[Theorem~3.9]{NS}}\label{thm:ns-39}
    Let $d \geq 2$, and let $F$ be a $d$-uniform Ferrers hypergraph on the vertex set $V_1 \ddd V_d$, where each $V_i = [n_i]$ for some
    natural number $n_i$.  The Betti table of the quotient ring $R/I(F)$ is
    \[
        \beta(R/I(F)) = \sum_{j=1}^{d} \sum_{S \in F_j} n_S \cdot k_S! \cdot \pi(0, d, \ldots, d+k_S),
    \]
    where $F_j$ is the Ferrers hypergraph
    \[
        F_j := \{(i_1, \ldots, \widehat{i_j}, \ldots, i_d) \st \text{there is some } i_j \in V_j \text{ such that } (i_1, \ldots, i_j, \ldots, i_d) \in F\},
    \]
    and for each $S = (i_1, \ldots, \widehat{i_j}, \ldots, i_d) \in F_j$
    \[
        n_S := \max\{i_j \in V_j \st (i_1, \ldots, i_j, \ldots, i_d) \in F\} \text{~and~} k_S := n_S - d + \sum_{p = 1, p \neq j}^{d} i_p.
    \]
\end{theorem}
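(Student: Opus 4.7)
The plan is to derive Theorem~\ref{thm:ns-39} from Lemma~\ref{lem:extension} applied to the decomposition of $\beta(I(F))$ supplied by Theorem~\ref{thm:ns-36}(iii). Since $I(F)$ has a $d$-linear resolution, $R/I(F)$ has a pure resolution of type $(0, d, d + 1, \ldots, d + t)$, and $\beta(R/I(F))$ is an extension of $\beta(I(F))$, so the extension lemma is exactly the bridge required.

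Starting from the formula $\beta(I(F)) = \sum_{j = 0}^{t} \alpha_j(F) \cdot j! \cdot \pi(d, d + 1, \ldots, d + j)$, I would match the notation of Lemma~\ref{lem:extension} with $d_0 = 0$ and $d_j = d + j - 1$, so that the truncation coefficient indexed by $j$ is $\alpha_{j - 1}(F) \cdot (j - 1)!$. The Ferrers closure forces $(1, \ldots, 1) \in F$, whence $\alpha_0(F) = 1$, which makes the $\pi(0)$ contribution $\beta_{0, 0}(R/I(F)) - \alpha_1 = 1 - 1$ vanish. Re-indexing by $k = j - 1$, the extension lemma produces
\[
    \beta(R/I(F)) = \sum_{k = 0}^{t} k! \Big[ (d + k) \alpha_k(F) - (k + 1) \alpha_{k + 1}(F) \Big] \cdot \pi(0, d, \ldots, d + k).
\]

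The second and principal step is to recognize each coefficient as a sum over $F_j$; specifically, for every $k \geq 0$,
\[
    (d + k) \alpha_k(F) - (k + 1) \alpha_{k + 1}(F) = \sum_{j = 1}^{d} \sum_{S \in F_j,\, k_S = k} n_S.
\]
I would prove this by double counting. Because $\sum_p v_p = k + d$ for each $v$ counted by $\alpha_k(F)$ and $\sum_p (w_p - 1) = k + 1$ for each $w$ counted by $\alpha_{k + 1}(F)$, the left-hand side rewrites as $\sum_{j} \sum_{v \in F,\, \sum v = k + d} v_j$ minus $\sum_{j} \sum_{w \in F,\, \sum w = k + d + 1} (w_j - 1)$. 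The Ferrers property makes the map $(w, j) \mapsto ((w_1, \ldots, w_j - 1, \ldots, w_d), j)$ a bijection from pairs with $w_j \geq 2$ onto pairs $(v, j)$ satisfying $v \in F$, $\sum v = k + d$, and $(v_1, \ldots, v_j + 1, \ldots, v_d) \in F$. After cancellation, the surviving pairs are exactly those where $v_j$ is maximal in direction $j$, i.e., $v_j = n_S$ for $S$ the $j$-th projection of $v$; these contribute precisely $\sum_{j, S : k_S = k} n_S$.

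The main obstacle is the combinatorial identity above and the careful bookkeeping of the Ferrers structure it requires. Once it is in hand, substituting back into the extension-lemma formula reproduces the decomposition claimed in Theorem~\ref{thm:ns-39}. A more direct but less illuminating alternative would compute each $\beta_{i, j}(R/I(F))$ via an explicit cellular resolution for Ferrers ideals and verify the decomposition entry by entry, bypassing the extension lemma at the cost of obscuring the role played by Theorem~\ref{thm:ns-36}.
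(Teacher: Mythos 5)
The paper does not actually prove Theorem~\ref{thm:ns-39}: it is recalled verbatim from Nagel--Sturgeon ``as a convenience to the reader,'' and the author then goes on to prove the alternative decomposition of Theorem~\ref{thm:RI-decomp} instead, noting that it is simpler than the hypergraph-indexed one. So there is no ``paper's own proof'' to compare against; what you have supplied is, in effect, a proof that the two decompositions agree, phrased as a derivation of Theorem~\ref{thm:ns-39} from Theorem~\ref{thm:RI-decomp} (equivalently, from Lemma~\ref{lem:extension} together with Theorem~\ref{thm:ns-36}).

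That derivation is correct, and the work is concentrated exactly where you identified it: the combinatorial identity
\[
(d + k)\,\alpha_k(F) - (k + 1)\,\alpha_{k+1}(F) \;=\; \sum_{j=1}^{d}\,\sum_{\substack{S \in F_j \\ k_S = k}} n_S .
\]
Your double count is sound. Writing $(d+k)\alpha_k(F) = \sum_j \sum_{v} v_j$ over $v \in F$ with $\sum v = k + d$, and $(k+1)\alpha_{k+1}(F) = \sum_j \sum_w (w_j - 1)$ over $w \in F$ with $\sum w = k + d + 1$, uses only $\sum_p v_p = k + d$ and $\sum_p (w_p - 1) = k + 1$. For each fixed direction $j$, the map $w \mapsto w - e_j$ on $\{w : w_j \geq 2\}$ is well-defined into $F$ by downward closure and lands onto $\{v : v + e_j \in F\}$, carrying $w_j - 1$ to $v_j$; the residual terms are the $v$ with $v + e_j \notin F$, which by the Ferrers property are precisely those with $v_j = n_S$ for $S$ the $j$-th projection. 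The constraint $\sum v = k + d$ then translates to $k_S = k$, and the sum over such $v$ of $v_j = n_S$ is exactly $\sum_{S \in F_j,\, k_S = k} n_S$. Substituting back into Theorem~\ref{thm:RI-decomp} (there is no circularity: Theorem~\ref{thm:RI-decomp} is proved from Lemma~\ref{lem:extension}, independently of Theorem~\ref{thm:ns-39}) and regrouping the sum by pairs $(j, S)$ rather than by $k$ yields the claimed formula. The preliminary bookkeeping is also fine: $\alpha_0(F) = 1$ follows either from $(1,\ldots,1) \in F$ by downward closure or directly from the $O$-sequence condition in Theorem~\ref{thm:ns-36}(iv), so the $\pi(0)$ term drops out as you say.

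What this buys, relative to the route one would expect Nagel--Sturgeon to take (explicit cellular or Eliahou--Kervaire-type resolutions of Ferrers ideals), is that it makes transparent why the hypergraph-indexed decomposition must agree with the clean $O$-sequence decomposition of Theorem~\ref{thm:RI-decomp}: the agreement is nothing more than the double-counting identity above. One small point of hygiene: the argument silently uses that a Ferrers hypergraph is nonempty and coordinatewise downward closed, neither of which is defined in this paper; if you intended this as a self-contained derivation you should state the definition before invoking ``Ferrers closure.''
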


In contrast to the above complicated decomposition that requires manipulation of a $d$-uniform Ferrers hypergraph, 
we provide a simpler decomposition.

\begin{theorem}\label{thm:RI-decomp}
    Let $I$ be an ideal of $R = K[x_1, \ldots, x_n]$ with a $d$-linear resolution.  If
    \[
        \beta(I) = \sum_{j=0}^{t} \alpha_j j! \cdot \pi(d,d+1,\ldots,d+j),
    \]
    where $\alpha_0, \ldots, \alpha_t$ are rational numbers and $t \leq n$, as described in Theorem~\ref{thm:ns-36}, then
    \[
        \beta(R/I) = \sum_{j=0}^{t} 
            \Big(  
                (d + j) \alpha_j - (j + 1) \alpha_{j+1}
            \Big)
        j! \cdot \pi(0,d,d+1,\ldots,d+j).
    \]
\end{theorem}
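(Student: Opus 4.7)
The plan is to apply Lemma~\ref{lem:extension} directly, with $M = R/I$ and truncation $\beta' = \beta(I)$. Since $I$ has a $d$-linear resolution, $R/I$ has a pure resolution of type $(0, d, d+1, \ldots, d+t)$, so in the notation of the extension lemma we set the total length to $T := t+1$ with $d_0 = 0$ and $d_i = d+i-1$ for $1 \leq i \leq T$.

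First I would re-index the Nagel-Sturgeon decomposition of $\beta(I)$ so that it matches the form required by Lemma~\ref{lem:extension}. Substituting $i = j+1$ in the hypothesis yields
\[
    \beta(I) = \sum_{i=1}^{T} \alpha'_i \cdot \pi(d_1, \ldots, d_i), \qquad \alpha'_i := \alpha_{i-1}(i-1)!,
\]
with the convention $\alpha'_{T+1} = 0$. The extension lemma then produces
\[
    \beta(R/I) = \big(\beta_{0,0}(R/I) - \alpha'_1\big)\,\pi(0) + \sum_{i=1}^{T} \Big((d+i-1)\alpha'_i - \alpha'_{i+1}\Big) \cdot \pi(0, d, \ldots, d+i-1).
\]
At this point I would observe that the leading $\pi(0)$ term vanishes: by Theorem~\ref{thm:ns-36}(iv) the sequence $(\alpha_0, \ldots, \alpha_t)$ is an $O$-sequence, so $\alpha_0 = 1$, and hence $\alpha'_1 = 1 = \beta_{0,0}(R/I)$.

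It then remains to substitute the factorial expressions for $\alpha'_i$ and $\alpha'_{i+1}$ and simplify the resulting coefficient:
\[
    (d+i-1)\,\alpha_{i-1}(i-1)! - \alpha_i \cdot i! = \Big((d+i-1)\alpha_{i-1} - i\alpha_i\Big)(i-1)!.
\]
Undoing the substitution via $j = i-1$ turns this into $\bigl((d+j)\alpha_j - (j+1)\alpha_{j+1}\bigr) j!$ multiplying $\pi(0, d, \ldots, d+j)$, which is exactly the claimed decomposition. The only real obstacle is keeping the two offset indexings straight (the factorial hidden inside $\alpha'_i$ versus the bare index $i$, and the shift $d_i = d+i-1$) without arithmetic slips; no conceptual input beyond Lemma~\ref{lem:extension} and the normalization $\alpha_0 = 1$ inherited from Theorem~\ref{thm:ns-36} is needed.
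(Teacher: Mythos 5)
Your proposal is correct and follows exactly the route the paper takes: apply Lemma~\ref{lem:extension} to $R/I$ with truncation $\beta(I)$, substituting $\alpha_j j!$ for the lemma's coefficients, and observe that the $\pi(0)$ term drops out because $\alpha_0 = 1$ forces $\beta_{0,0}(R/I) - \alpha'_1 = 0$. The paper's proof is a one-line citation of the lemma; your write-up just makes the re-indexing $i = j+1$, the factorial bookkeeping, and the vanishing leading term explicit, which is exactly the arithmetic the paper leaves to the reader.
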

\begin{proof}
    This follows immediately from Lemma~\ref{lem:extension}, using $\alpha_j j!$ in place of $\alpha_j$.
\end{proof}

\begin{example}\label{exa:bsd-decomp}
    Let $I = (x^2y, x^2z, xy^2, xyz, xz^2, y^3) \subset R = K[x,y,z]$.  The Betti table of $I$ is
    \[
        \beta(I) = 
        \begin{array}{c|ccc}
            \beta_{i,j} & 0 & 1 & 2 \\
            \hline
            3 & 6 & 7 & 2 \\
        \end{array}
    \]
    By Theorem~\ref{thm:ns-36}, this decomposes to $2 \cdot 2! \cdot \pi_{3,4,5} + 3 \cdot 1! \cdot \pi_{3,4} + 1 \cdot 0! \cdot \pi_{3}$
    (see also \cite[Example~3.3]{NS}).  Hence the associated $O$-sequence is $\alpha = (1, 3, 2)$ and $d = 3$. 

    The Betti table of $R/I$ is
    \[
        \beta(R/I) = \begin{array}{c|cccc}
            \beta_{i,j} & 0 & 1 & 2 & 3 \\
            \hline
                      0 & 1 & . & . & . \\
                      1 & . & . & . & . \\
                      2 & . & 6 & 7 & 2
        \end{array}
    \]
    By Theorem~\ref{thm:RI-decomp}, the coefficients of the decomposition of $\beta(R/I)$ are
    \begin{equation*}
        \begin{split}
            j=0\colon & (3+0)\cdot 1 - (0+1)\cdot 3 = 0 \\
            j=1\colon & (3+1)\cdot 3 - (1+1)\cdot 2 = 8 \\
            j=2\colon & (3+2)\cdot 2 - (2+1)\cdot 0 = 10 \\
        \end{split}
    \end{equation*}
    Thus $\beta(R/I) = 10 \cdot 2! \cdot \pi(0,3,4,5) + 8 \cdot 1! \cdot \pi(0,3,4)$.
\end{example}

The $O$-sequences of quotients by powers of the maximal irrelevant ideal correspond precisely to the pure diagrams given
by the degree sequences $(0, d, d+1, \ldots, d+k)$.

\begin{remark}\label{rem:bsd-mm}
    Let $A = K[x_1, \ldots, x_d] / \mm^{t+1}$. The Betti table associated to $h_A$ in Theorem~\ref{thm:RI-decomp} is the pure diagram
    $\frac{(d+t)!}{(d-1)!} \cdot \pi(0,d,d+1,\ldots,d+t)$.  This is the Betti table of $K[x_1,\ldots,x_{t+1}]/\mathfrak{m}^{d}$.
\end{remark}

Moreover, we easily see the decomposition in terms of normalized pure diagrams.
This result was independently described by S\"oderberg~\cite[Proposition~3.4]{So} using
a different lexicon.

\begin{corollary}\label{cor:RI-decomp}
    Let $I$ be an ideal of $R = K[x_1, \ldots, x_n]$ with a $d$-linear resolution.
    If
    \[
        \beta(I) = \sum_{j=0}^{t} \alpha_j j! \cdot \pi(d,d+1,\ldots,d+j),
    \]
    where $\alpha_0, \ldots, \alpha_t$ are rational numbers and $t \leq n$,
    then
    \[
        \beta(R/I) = \sum_{j=1}^{t} 
            \left(
                \frac{\alpha_j}{\binom{d+j-1}{j}} - \frac{\alpha_{j+1}}{\binom{d+j}{j+1}}
            \right)
        \cdot \overline{\pi}(0,d,d+1,\ldots,d+j).
    \]
\end{corollary}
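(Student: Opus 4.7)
The plan is to derive Corollary~\ref{cor:RI-decomp} as an immediate consequence of Theorem~\ref{thm:RI-decomp} by rewriting each pure diagram in terms of the corresponding normalized pure diagram. Since every degree sequence $(0, d, d+1, \ldots, d+j)$ has leading entry $0$, the definition of $\overline{\pi}$ gives
\[
    \overline{\pi}(0, d, \ldots, d+j) = d(d+1)\cdots(d+j) \cdot \pi(0, d, \ldots, d+j) = \frac{(d+j)!}{(d-1)!} \cdot \pi(0, d, \ldots, d+j).
\]
Solving for $\pi$ and substituting into the decomposition of $\beta(R/I)$ from Theorem~\ref{thm:RI-decomp} reduces the task to a binomial-coefficient simplification.

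Carrying out the substitution, the coefficient of $\overline{\pi}(0, d, \ldots, d+j)$ becomes
\[
    \frac{j!\,(d-1)!}{(d+j)!}\bigl[(d+j)\alpha_j - (j+1)\alpha_{j+1}\bigr].
\]
Splitting this into two pieces and applying the identities $\binom{d+j-1}{j} = \frac{(d+j-1)!}{j!\,(d-1)!}$ and $\binom{d+j}{j+1} = \frac{(d+j)!}{(j+1)!\,(d-1)!}$, the coefficient collapses to $\frac{\alpha_j}{\binom{d+j-1}{j}} - \frac{\alpha_{j+1}}{\binom{d+j}{j+1}}$, exactly matching the statement.

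The argument is entirely computational: the only real work is the factorial bookkeeping to absorb the stray factor of $(d+j)$ into $(d+j-1)!$ in the first term and the factor $(j+1)$ into $(j+1)!$ in the second. Given Theorem~\ref{thm:RI-decomp}, no further input from Boij-S\"oderberg theory is required---the Corollary amounts to a change of basis from the pure diagrams $\pi$ to the normalized pure diagrams $\overline{\pi}$, with the index range of the sum reflecting the fact that the $j=0$ contribution from Theorem~\ref{thm:RI-decomp} has been absorbed into the new coefficients.
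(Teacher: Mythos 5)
Your proposal is correct and takes essentially the same route as the paper: the published proof is precisely the one-line observation that $\overline{\pi}(0,d,\ldots,d+j) = \frac{(d+j)!}{(d-1)!}\,\pi(0,d,\ldots,d+j)$ followed by substituting into Theorem~\ref{thm:RI-decomp}, and your proof spells out the factorial bookkeeping that the paper leaves implicit. Your simplification of the coefficient to $\frac{\alpha_j}{\binom{d+j-1}{j}} - \frac{\alpha_{j+1}}{\binom{d+j}{j+1}}$ is verified by $\frac{j!(d-1)!}{(d+j)!}(d+j) = \frac{1}{\binom{d+j-1}{j}}$ and $\frac{j!(d-1)!}{(d+j)!}(j+1) = \frac{1}{\binom{d+j}{j+1}}$.

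One cautionary remark: your closing sentence about the $j=0$ contribution being ``absorbed into the new coefficients'' is not accurate and should be discarded. Substituting into the $j=0$ term of Theorem~\ref{thm:RI-decomp} produces $\bigl(\tfrac{\alpha_0}{\binom{d-1}{0}} - \tfrac{\alpha_1}{\binom{d}{1}}\bigr)\,\overline{\pi}(0,d) = (\alpha_0 - \alpha_1/d)\,\overline{\pi}(0,d)$, which is generally nonzero (it vanishes only when $\alpha_1 = d\alpha_0 = d$). Nothing absorbs it; the sum in the Corollary as printed should in fact run from $j=0$, matching both Theorem~\ref{thm:RI-decomp} and the sum in Lemma~\ref{lem:oseq-decomp}. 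This is a minor index slip in the stated Corollary, not a gap in your argument, but you should not invent an explanation for a discrepancy that is really just a typo.
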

\begin{proof}
    Since $\overline{\pi}(0,d,d+1,\ldots,d+j) = \frac{(d+j)!}{(d-1)!} \cdot \pi(0,d,d+1,\ldots,d+j)$, this
    follows immediately from Theorem~\ref{thm:RI-decomp}.
\end{proof}

\begin{example}\label{exa:bsd'-decomp}
    Continuing Example~\ref{exa:bsd-decomp}, we have that $\beta(R/I)$ is associated to the 
    $O$-sequence $\alpha = (1,3,2)$ with $d = 3$.  We thus see that the decomposition of
    $\beta(R/I)$ using normalized pure diagrams has coefficients
    \begin{equation*}
        \begin{split}
            j=0\colon & \frac{1}{\binom{3+0-1}{0}} - \frac{3}{\binom{3+1-1}{1}} = 0 \\
            j=1\colon & \frac{3}{\binom{3+1-1}{1}} - \frac{2}{\binom{3+2-1}{2}} = \frac{1}{3} \\
            j=2\colon & \frac{2}{\binom{3+2-1}{2}} - \frac{0}{\binom{3+3-1}{3}} = \frac{2}{3} \\
        \end{split}
    \end{equation*}
    That is, $\beta(R/I) = \frac{1}{3} \cdot \overline{\pi}(0,3,4,5) + \frac{2}{3} \cdot \overline{\pi}(0,3,4)$.
\end{example}

% -- Subsection
\subsection{Simplicial cones of finite \texorpdfstring{$O$}{O}-sequences}\label{sub:cone}~

The decomposition in Theorem~\ref{thm:RI-decomp} provides some deceptively simple bounds on the entries of an $O$-sequence.
These inequalities were recently and independently discovered by Boij and Smith~\cite[Theorem~2.1]{BSm} using two approaches that 
differ from the one presented here.  Moreover, Bertone, Nguyen, and Vorwerk~\cite[Corollary~4.18]{BNV} provided an analogous result
in the case of modules of an exterior algebra.

\begin{lemma}\label{lem:halfspaces}
    Let $\alpha$ be an $O$-sequence.  For all nonnegative integers $j$, $$(\alpha_1 + j) \alpha_k \geq (j + 1) \alpha_{j+1}.$$
\end{lemma}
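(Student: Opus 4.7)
The plan is to invoke Theorems~\ref{thm:ns-36} and~\ref{thm:RI-decomp} in concert: realize $\alpha$ as the $O$-sequence attached to an ideal with a linear resolution, and then extract the desired inequality from the nonnegativity of the coefficients in the Boij-S\"oderberg decomposition of the quotient ring. The inequality to be proved has $\alpha_1$ on the left-hand side, which suggests choosing $d$ in Theorem~\ref{thm:ns-36} to be exactly $\alpha_1$, since this is the smallest admissible value of $d$ and therefore produces the tightest bound coming from Corollary~\ref{cor:nzc}.

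First, I would reduce to the case of a finite $O$-sequence. Since the defining condition $h_{j+1} \leq h_j^{\langle j\rangle}$ is local at each index, any truncation $(\alpha_0,\ldots,\alpha_t,0,0,\ldots)$ of an $O$-sequence is again an $O$-sequence, so for any fixed $j$ it suffices to work with a long enough finite prefix. Next, with $d := \alpha_1$ and $n$ taken large enough that $t \leq n$, the implication (iv)$\Rightarrow$(i) of Theorem~\ref{thm:ns-36} yields an ideal $I \subseteq R = K[x_1,\ldots,x_n]$ with a $d$-linear resolution whose Betti table decomposes as
\[
    \beta(I) = \sum_{k=0}^{t} \alpha_k\, k!\cdot \pi(d,d+1,\ldots,d+k).
\]
Theorem~\ref{thm:RI-decomp} then produces the Boij-S\"oderberg decomposition
\[
    \beta(R/I) = \sum_{k=0}^{t} \Big((d+k)\alpha_k - (k+1)\alpha_{k+1}\Big)\, k!\cdot \pi(0,d,d+1,\ldots,d+k),
\]
and the uniqueness clause of Theorem~\ref{thm:BS-decomp}---equivalently, Corollary~\ref{cor:nzc}---forces each of these coefficients to be a nonnegative integer. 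Reading off the $k = j$ coefficient and substituting $d = \alpha_1$ yields $(\alpha_1+j)\alpha_j \geq (j+1)\alpha_{j+1}$.

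The main subtlety is bookkeeping rather than genuine difficulty: I must verify that the $k = 0$ term is internally consistent (the coefficient $d\alpha_0 - \alpha_1 = d - \alpha_1$ vanishing when $d = \alpha_1$, which matches the fact that $\beta_{0,0}(R/I) = 1$ leaves no room for a $\pi_{(0)}$ summand in the extension formula of Lemma~\ref{lem:extension}), and I must ensure the truncation step cleanly produces an $O$-sequence with $\alpha_1$ unchanged, so that the inequality at index $j$ is independent of how far out the prefix is taken. Once these points are in hand, the argument reduces to one application each of Theorems~\ref{thm:ns-36} and~\ref{thm:RI-decomp} combined with the positivity structure built into Boij-S\"oderberg theory.
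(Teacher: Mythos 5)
Your proof is correct and follows essentially the same route as the paper: realize $\alpha$ via Theorem~\ref{thm:ns-36} as the Boij--S\"oderberg coefficients of $\beta(I)$ for an ideal $I$ with a $d$-linear resolution, apply Theorem~\ref{thm:RI-decomp}, and read off nonnegativity of the decomposition coefficients. You do make explicit a detail the paper leaves implicit---that one must take $d = \alpha_1$, the smallest admissible choice, to turn the generic bound $(d+j)\alpha_j \geq (j+1)\alpha_{j+1}$ into the sharp form stated---which is a welcome clarification rather than a deviation.
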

\begin{proof}
    By Theorem~\ref{thm:ns-36}, there is an ideal $I \subset R$ such that $\alpha_0, \ldots, \alpha_t$ are the Boij-S\"oderberg
    coefficients of the decomposition of $\beta(I)$.  Hence, ignoring the zero coefficients in Theorem~\ref{thm:RI-decomp}, 
    we have the Boij-S\"oderberg decomposition of $\beta(R/I)$.  By Theorem~\ref{thm:BS-decomp} the remaining coefficients
    are positive.  That is, $(\alpha_1 + j) \alpha_j - (j+1) \alpha_{j+1} \geq 0$ for nonnegative integers $j$.
\end{proof}

Alternatively, the preceding lemma follows from Corollary~\ref{cor:nzc}.

\begin{remark}\label{rem:oseq-mm}
    Let $j$ be a nonnegative integer.  In this case, we have $(d + j) \binom{d+j-1}{j-1} = (j+1) \binom{d+j}{j}$.  Hence
    the $O$-sequence $h_A = (\alpha_0, \ldots, \alpha_t)$, where $A = K[x_1, \ldots, x_d] / \mm^{t+1}$, is on the hyperplane
    $(\alpha_1 + j) \alpha_j = (j + 1) \alpha_{j+1}$ if $t > j$.  That is to say, the bounds in the preceding lemma are sharp.
\end{remark}

Moreover, combining Theorem~\ref{thm:RI-decomp} and Remark~\ref{rem:bsd-mm}, we can easily see the unique decomposition of a
$O$-sequence into a countable nonnegative linear combination of $O$-sequences of powers of the maximal irrelevant ideal.  

\begin{lemma}\label{lem:oseq-decomp}
    If $\alpha = (\alpha_0, \alpha_1, \ldots)$ is an $O$-sequence of a quotient of $R = K[x_1, \ldots, x_d]$, then
    \[
        \alpha = \sum_{j=0}^{\infty} 
            \left(
                \frac{\alpha_j}{\binom{d+j-1}{j}} - \frac{\alpha_{j+1}}{\binom{d+j}{j+1}}
            \right)
        \cdot h_{R/\mm^{j+1}}
    \]
    is the unique countable nonnegative linear combination of the $O$-sequences $h_{R/\mm^{j+1}}$, where $0 \leq j \leq t$.
    Moreover, the sum of the coefficients is $1$.
\end{lemma}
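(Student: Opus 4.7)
The plan is threefold: verify the claimed identity by a telescoping computation, deduce nonnegativity from Lemma~\ref{lem:halfspaces} together with the bound $\alpha_1 \leq d$, and then obtain uniqueness and the sum-equals-one claim almost for free. Throughout I would set $\gamma_j := \alpha_j / \binom{d+j-1}{j}$ and note the key simplification: since $\binom{d+(j+1)-1}{j+1} = \binom{d+j}{j+1}$, the claimed coefficient of $h_{R/\mm^{j+1}}$ is just the consecutive difference $\gamma_j - \gamma_{j+1}$.

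To verify the identity, I would use Example~\ref{exa:mm}, which tells us that the $k$-th entry of $h_{R/\mm^{j+1}}$ equals $\binom{d+k-1}{k}$ for $k \leq j$ and vanishes otherwise. Thus the $k$-th entry of the proposed right-hand side is $\binom{d+k-1}{k} \sum_{j \geq k}(\gamma_j - \gamma_{j+1})$, which telescopes to $\binom{d+k-1}{k} \cdot \gamma_k = \alpha_k$ (convergence is automatic because $\alpha$ has finite support when the quotient is artinian; for a non-artinian $\alpha$ the two sides agree in every degree).

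For the nonnegativity of each coefficient $\gamma_j - \gamma_{j+1}$, clearing denominators reduces the claim to $(d+j)\alpha_j \geq (j+1)\alpha_{j+1}$. Lemma~\ref{lem:halfspaces} already furnishes $(\alpha_1 + j)\alpha_j \geq (j+1)\alpha_{j+1}$, and since $\alpha$ is the Hilbert function of a quotient of $R = K[x_1, \ldots, x_d]$ we have $\alpha_1 \leq \dim_K R_1 = d$, which upgrades the bound to exactly what is needed. Uniqueness is essentially free: any expression $\alpha = \sum_{j \geq 0} c'_j \cdot h_{R/\mm^{j+1}}$ yields the triangular relations $\alpha_k / \binom{d+k-1}{k} = \sum_{j \geq k} c'_j$, which forces $c'_k = \gamma_k - \gamma_{k+1}$. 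Finally, evaluating the decomposition at degree $0$ and using $[h_{R/\mm^{j+1}}]_0 = 1$ yields $\sum_j c_j = \alpha_0 = 1$; this also drops out of the telescoping identity $\sum_{j \geq 0}(\gamma_j - \gamma_{j+1}) = \gamma_0 = 1$.

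There is no real obstacle here. The only step requiring any genuine input beyond bookkeeping is the nonnegativity of the coefficients, and even that reduces immediately to Lemma~\ref{lem:halfspaces} once one notices the cheap but essential fact that $\alpha_1 \leq d$ for any $O$-sequence of a $d$-variate quotient. The substitution $\gamma_j = \alpha_j/\binom{d+j-1}{j}$ is what makes every piece of the argument transparent, since under it both the identity and the sum-equals-one assertion become one-line telescoping statements.
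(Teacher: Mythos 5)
Your proof is correct and well organized. The paper itself gives no written proof of this lemma; in the surrounding text the author merely says it follows by ``combining Theorem~\ref{thm:RI-decomp} and Remark~\ref{rem:bsd-mm}'' (i.e.\ translating the normalized-pure-diagram decomposition of $\beta(R/I)$ in Corollary~\ref{cor:RI-decomp} into a decomposition of the $O$-sequence via the correspondence $h_{R/\mm^{j+1}} \leftrightarrow \overline{\pi}(0,d,\ldots,d+j)$), and then notes afterward that ``the preceding lemma can also be derived directly from the Hilbert functions.'' Your argument is precisely that second, more elementary route: the substitution $\gamma_j = \alpha_j/\binom{d+j-1}{j}$ exposes the coefficients as a telescoping difference, Example~\ref{exa:mm} supplies $[h_{R/\mm^{j+1}}]_k$, and the degree-by-degree identity, uniqueness (from the upper-triangular structure of the $h_{R/\mm^{j+1}}$), and the sum-equals-one claim all fall out in one line each. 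Using Lemma~\ref{lem:halfspaces} together with $\alpha_1 \le d$ to get nonnegativity is exactly the right move and matches what the paper would need as well. The advantage of your route is that it makes the lemma self-contained at the level of Hilbert functions, rather than routing through the Boij--S\"oderberg decomposition of Betti tables and back.

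One small inaccuracy to clean up: the parenthetical claim that ``for a non-artinian $\alpha$ the two sides agree in every degree'' is not quite right. Your telescoping step $\sum_{j\geq k}(\gamma_j - \gamma_{j+1}) = \gamma_k$ requires $\gamma_j \to 0$, which can fail for infinite $O$-sequences (e.g.\ $\alpha = h_R$ gives $\gamma_j \equiv 1$, making every coefficient zero and the sum of coefficients zero rather than one). The lemma is really about finite $O$-sequences --- the paper's phrase ``where $0 \le j \le t$'' signals this --- so you should simply drop the non-artinian aside or state explicitly that $\alpha$ is assumed to have a socle degree $t$, whence $\gamma_{t+1} = 0$ and all sums are finite.
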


\begin{remark}
    The preceding lemma can also be derived directly from the Hilbert functions, as $h_{R}$ is greater than $\alpha$ in each entry.
    Moreover, Bertone, Nguyen, and Vorwerk \cite[Proposition~4.14]{BNV} gave an analogous result in the case of Stanley-Reisner rings.
\end{remark}

\begin{example}\label{exa:oseq-decomp}
    Continuing Example~\ref{exa:bsd'-decomp}, we have that $\beta(R/I)$ is associated to the 
    $O$-sequence $\alpha = (1,3,2)$ with $d = 3$.  Moreover, $\beta(R/I) = \frac{1}{3} \cdot \overline{\pi}(0,3,4,5) + \frac{2}{3} \cdot \overline{\pi}(0,3,4)$.
    Thus we see that 
    \[
        (1,3,2) = \frac{1}{3} \cdot h_{R/\mm^{2+1}} + \frac{2}{3} \cdot h_{R/\mm^{1+1}} 
                = \frac{1}{3} \cdot (1,3,6) + \frac{2}{3} \cdot (1,3).
    \]
\end{example}

Combining the preceding results, we can describe the cone of $O$-sequences.
This cone was recently and independently described by Boij and Smith~\cite[Theorem~1.1]{BSm} using a different
approach than then one presented here; moreover, they explored several related cones in great detail.

\begin{theorem}\label{thm:cone}
    If $C$ is the convex hull of $O$-sequences of quotients of $R = K[x_1, \ldots, x_d]$, then
    \begin{enumerate}
        \item $C$ is the intersection of the half-spaces $(d + j) \alpha_j \geq (j + 1) \alpha_{j+1}$ for integers $j \geq 0$;
        \item the extremal rays of $C$ are given by the $O$-sequences $h_{R/\mm^{j+1}}$ for $j \geq 0$; and
        \item $C$ is a closed simplicial cone.
    \end{enumerate}
\end{theorem}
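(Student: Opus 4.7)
The plan is to derive all three parts as consequences of the decomposition in Lemma~\ref{lem:oseq-decomp} together with the half-space bounds of Lemma~\ref{lem:halfspaces}, exploiting the fact that these two results are essentially complementary: the inequalities of Lemma~\ref{lem:halfspaces}, once normalized using $\alpha_1 \leq d$, are precisely the nonnegativity conditions on the coefficients appearing in Lemma~\ref{lem:oseq-decomp}.

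For part (i), I would prove the two inclusions separately. The inclusion $C \subseteq \bigcap_j \{(d+j)\alpha_j \geq (j+1)\alpha_{j+1}\}$ is immediate: an $O$-sequence of a quotient of $R$ satisfies $\alpha_1 \leq d$, so Lemma~\ref{lem:halfspaces} gives $(d+j)\alpha_j \geq (\alpha_1+j)\alpha_j \geq (j+1)\alpha_{j+1}$, and the property is preserved under convex combinations. For the reverse inclusion, one checks the identity $(d+j)\binom{d+j-1}{j} = (j+1)\binom{d+j}{j+1}$, from which the inequality $(d+j)\alpha_j \geq (j+1)\alpha_{j+1}$ is equivalent to the nonnegativity of the $j^{\rm th}$ coefficient in the decomposition of Lemma~\ref{lem:oseq-decomp}. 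Hence any point of the half-space intersection is a nonnegative combination of the $h_{R/\mm^{j+1}}$ and thus lies in $C$.

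For part (ii), I would invoke Remark~\ref{rem:oseq-mm}: the sequence $h_{R/\mm^{j+1}}$ lies on the hyperplane $(d+i)\alpha_i = (i+1)\alpha_{i+1}$ for every $i<j$ by the remark and for every $i>j$ trivially (both sides vanish), while the remaining defining inequality is strict. This identifies each $h_{R/\mm^{j+1}}$ as a one-dimensional face of $C$, hence an extremal ray; that no other extremal rays exist follows from the explicit expression of every element of $C$ as a nonnegative combination of these sequences.

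For part (iii), closedness is immediate from (i), as $C$ is an intersection of closed half-spaces. Simpliciality reduces to the uniqueness asserted in Lemma~\ref{lem:oseq-decomp}, which follows from the linear independence of the $h_{R/\mm^{j+1}}$: the sequence $h_{R/\mm^{j+1}}$ has socle degree $j$, so reading off coefficients by descending socle degree uniquely determines them. The main subtlety will be interpreting ``simplicial'' in the infinite-generated setting; the natural reading is that for each bound $t$ on the socle degree, the truncated cone of $O$-sequences with socle degree at most $t$ is a classical simplicial cone on the $t+1$ independent generators $h_{R/\mm^{1}}, \ldots, h_{R/\mm^{t+1}}$, and the infinite cone is the directed union of these finite simplicial cones.
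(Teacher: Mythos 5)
Your proposal is correct and follows the same route as the paper, whose proof is just the one line ``the three claims follow from Lemma~\ref{lem:halfspaces}, Remark~\ref{rem:oseq-mm}, and Lemma~\ref{lem:oseq-decomp}, respectively.'' You have unpacked exactly what those citations are doing: the observation that $\alpha_1\le d$ upgrades the Lemma~\ref{lem:halfspaces} inequality $(\alpha_1+j)\alpha_j\ge(j+1)\alpha_{j+1}$ to the stated $(d+j)\alpha_j\ge(j+1)\alpha_{j+1}$, the binomial identity $(d+j)\binom{d+j-1}{j}=(j+1)\binom{d+j}{j+1}$ (which is Remark~\ref{rem:oseq-mm} in disguise) converts those half-spaces into coefficient nonnegativity for Lemma~\ref{lem:oseq-decomp}, and linear independence of the $h_{R/\mm^{j+1}}$ by socle degree gives uniqueness, hence simpliciality. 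Indeed your write-up is slightly more careful than the paper's: part~(i) is a set equality, and Lemma~\ref{lem:halfspaces} alone only gives the forward containment, so the reverse inclusion genuinely does require the decomposition lemma as you note; likewise in part~(ii) you correctly add that completeness of the list of extremal rays is again a consequence of the decomposition, not of Remark~\ref{rem:oseq-mm}. Your remark about interpreting ``simplicial'' as a directed union over socle-degree truncations is a reasonable reading of what the paper leaves implicit.
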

\begin{proof}
    The three claims follow from Lemma~\ref{lem:halfspaces}, Remark~\ref{rem:oseq-mm}, and Lemma~\ref{lem:oseq-decomp}, respectively.
\end{proof}

S\"oderberg~\cite[Theorem~4.7]{So} described statement (i) in a different and distinct manner.  Moreover, Bertone, Nguyen, and 
Vorwerk~\cite[Corollary~4.15]{BNV} provided an analogous statement to (ii) for the Hilbert functions of Stanley-Reisner rings.

We close by noting that the space of finite $O$-sequence with degree $1$ entry bounded by $d$ and socle degree bounded by $t$ is isomorphic to the space of 
$d$-linear Betti tables with projective dimension bounded by $t$.  Moreover, this space is $t+1$ dimensional.  This follows 
as the decompositions in Corollary~\ref{cor:RI-decomp} and Lemma~\ref{lem:oseq-decomp} are the same, up to replacing the Betti table of a
power of a maximal irrelevant ideal with the $O$-sequence of a power of a maximal irrelevant ideal (see Remark~\ref{rem:bsd-mm}).

% -----------------------------------------------------------------------------
% -- Acknowledgement
\begin{acknowledgement}
    We thank Mats Boij and Greg Smith for their helpful comments.
\end{acknowledgement}

% -----------------------------------------------------------------------------
% -- The Bibliography

% -----------------------------------------------------------------------------
% -- Happy, Happy, Joy, Joy!
\end{document}